\newcommand{\calJ}{\mathcal{J}}
\newcommand{\alg}[1]{\mathrm{alg}\left\{#1\right\}}
\newcommand{\espan}[1]{\mathrm{span}\left\{#1\right\}}
\newcommand{\FF}{\mathbb{F}}
\newtheorem{theorem}{Theorem}[section]
\newtheorem{proposition}[theorem]{Proposition}
\newtheorem{corollary}[theorem]{Corollary}
\theoremstyle{definition}
\newtheorem{example}[theorem]{Example}
\theoremstyle{remark}
\newtheorem{remark}[theorem]{Remark}
\begin{document}

\title{On some Jordan baric algebras}

\author[Alberto Elduque]{Alberto Elduque$^{\star}$}
\thanks{$^{\star}$ Supported by the Spanish Ministerio de Educaci\'{o}n y Ciencia and
FEDER (MTM 2010-18370-C04-02) and by the Diputaci\'on General de Arag\'on (Grupo de Investigaci\'on de \'Algebra)}
\address{Departamento de Matem\'aticas e Instituto Universitario de Matem\'aticas y Aplicaciones,
Universidad de Zaragoza, 50009 Zaragoza, Spain}
\email{elduque@unizar.es}

\author[Alicia Labra]{Alicia Labra$^{\star\star}$}
\thanks{$^{\star\star}$Supported by Universidad de Chile, Proyecto ENL 11 / 09, Enlace con Concurso Fondecyt Regular VID 2011.}
\address{Departamento de Matem\'aticas,
Facultad de Ciencias, Universidad de Chile.  Casilla 653, Santiago, Chile}
\email{alimat@uchile.cl}




\begin{abstract}
Several classes of baric algebras studied by different authors will be given a unified treatment, using the technique of gametization introduced by Mallol
et al. \cite{MVB}. Many of these algebras will be shown to be either Jordan algebras or to be closely related to them.
\end{abstract}

\maketitle


\section{Introduction}

In \cite{EO} the class of commutative algebras satisfying the condition (`adjoint identity') $x^2x^2=N(x)x$ for any $x$, where $N$ denotes a nonzero cubic form, were shown to be closely related to certain Jordan algebras of generic degree at most $4$. Among these algebras, it was noticed in the last remarks of \cite{EO} that those in which the cubic norm $N$ takes the form $N(x)=\omega(x)^3$, for a nonzero algebra homomorphism $\omega$ into the ground field, may have a signification in genetic. These algebras had been considered in \cite{W}, where $\ker\omega$ was shown to be a Jordan algebra. However, Mallol and Varro showed that, in general, these algebras are not power-associative \cite{MV}.

In the following, let $\FF$ be a ground field of characteristic $\ne 2$ containing at least $4$ elements (we are just excluding the field of three elements in characteristic $3$). This will insure that the conditions satisfied by the algebras under consideration remain valid when extending scalars.
A \emph{baric} algebra over $\FF$ is a nonassociative but commutative algebra $A$ over $\FF$ endowed with a nonzero algebra homomorphism $\omega:A\rightarrow \FF$. Baric algebras were introduced by Etherington \cite{Eth1,Eth2}, as an algebraic framework for certain problems in genetic.

In \cite{AL} the class of baric algebras satisfying $x^2x^2=\omega(x)x^3$ for any $x$ was considered. It was shown that these algebras are always Jordan algebras, with arguments based on the study of Peirce decompositions relative to idempotents. Also, in \cite{LR2} another class of baric algebras, those satisfying $x^2x^2=3\omega(x)x^3-3\omega(x)^2x^2+\omega(x)^3x$ for any $x$, were shown to be Jordan algebras too.

In this paper, all these baric algebras will be given a unified treatment. More precisely, the baric algebras $(A,\omega)$ satisfying one of the following conditions will be considered:
\begin{align}
x^2x^2&=\omega(x)^3x,\label{eq:omega3}\\
x^2x^2&=\omega(x)x^3,\label{eq:omega}\\
x^2x^2&=3\omega(x)x^3-3\omega(x)^2x^2+\omega(x)^3x,\label{eq:omegas}\\
x^2x^2&=2\omega(x)x^3-\omega(x)^2x^2,\label{eq:omega2s}
\end{align}
for any $x\in A$.

It was shown in \cite{BCOZ} how to move from algebras satisfying \eqref{eq:omega} to algebras satisfying \eqref{eq:omegas} and back. Actually, this is a particular case of the \emph{gametization process} considered in \cite{MVB}, which will play a key role here. Next section will be devoted to this process.

As mentioned above, algebras satisfying \eqref{eq:omega3} were considered in \cite{W} and in \cite{EO}. In \cite{MV} these algebras were shown not to be Jordan algebras in general, idempotents were shown to exist and properties of the Peirce decompositions relative to idempotents of these algebras were studied.

Peirce decompositions of the baric algebras satisfying \eqref{eq:omegas} were considered in \cite{LR2}. Later on, it was proved in \cite{MS} that if $A$ is a baric algebra satisfying an equation of the form
$x^2 x^2 = \alpha w(x)x^3 + \beta w(x)^2 x^2 + \gamma w(x)^3 x$, for scalars $\alpha,\beta,\gamma\in\FF$ (which must satisfy $\alpha+\beta+\gamma=1$), then $A$ is power-associative if and only if it is a Jordan algebra, if and only if $A$ satisfies either \eqref{eq:omega} or \eqref{eq:omegas}.

As for baric algebras satisfying \eqref{eq:omega2s}, the following example in \cite{BCOZ} shows that they are not Jordan algebras in general.

\begin{example}
Let $A$ be the commutative algebra
with a basis $ \{e,u_1,u_2,u_3,$ $ u_4,s,t\}$ and multiplication determined by
\begin{gather*}
e^2 = e, \quad eu_i = \frac{1}{2}u_i \; \forall \; i=1,2,3,4,  \quad et = t, \\ u_1u_4 = -s-t, \quad u_1s = u_1t = u_3, \\  u_2u_3 = s + t, \quad u_2s = u_2t = u_4,\quad \text{other products being zero.}
\end{gather*}
and $\omega: A \rightarrow \FF$ given by $\omega(e) =1$, $\omega(u_i) = \omega(s) = \omega(t) = 0$.

Then $(A,\omega)$ is a baric algebra  satisfying \eqref{eq:omega2s}. But  it is not a Jordan algebra, since for $x = u_1 + s$ and $y = u_2 $, we have $(x^2y)x = 4u_3 \neq x^2(yx) = 0$.
\end{example}

\bigskip
We will use the technique of gametization  introduced in \cite{MVB} to find relations among the baric algebras considered here. The gametization process will be reviewed in Section \ref{se:gametization}. Then, in Section \ref{se:Jordan}, the way to obtain a Jordan algebra of generic degree at most $4$ in \cite{EO} will be reviewed, and this will be used to prove in a very simple way that baric algebras satisfying \eqref{eq:omega} or \eqref{eq:omegas} are always Jordan algebras, while algebras satisfying \eqref{eq:omega3} are very close to being such. A characterization of the nonzero idempotent elements in the algebras satisfying \eqref{eq:omega3}, \eqref{eq:omega} or \eqref{eq:omegas} will be given in Section \ref{se:idempotents}. Finally, Section \ref{se:omega2s} will be devoted to the baric algebras satisfying equation \eqref{eq:omega2s}.


\section{Gametization}\label{se:gametization}

In this section the process of \emph{gametization} in \cite{MVB} will be reviewed in a way suitable for our purposes. Given a baric algebra $(A,\omega)$ over a field $\FF$, with multiplication denoted by juxtaposition, and a scalar $1\ne\gamma\in\FF$, the \emph{$\gamma$-gametization} of $A$ is the algebra $A_\gamma$, where $A_\gamma$ coincides with $A$ as a vector space, but with the new multiplication given by
\[
x\bullet y=(1-\gamma)xy+\frac{1}{2}\gamma\bigl(\omega(x)y+\omega(y)x\bigr),
\]
for any $x,y\in A$. It is clear that $\omega$ is a homomorphism too from $A_\gamma$ into $\FF$, so $(A_\gamma,\omega)$ is again a baric algebra.

\begin{proposition} Let $(A,\omega)$ be a baric algebra over the field $\FF$. Then
$(A_\gamma)_\delta=A_{\gamma+\delta-\gamma\delta}$ for any $1\ne \gamma,\delta\in\FF$. In particular $(A_\gamma)_{\frac{\gamma}{\gamma -1}}=A$, so $(A_2)_2=A$.
\end{proposition}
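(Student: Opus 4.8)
The plan is to compute the double gametization directly from the definition and simplify. Write $\mu=\gamma+\delta-\gamma\delta$; note first that $\mu\ne 1$ since $1-\mu=(1-\gamma)(1-\delta)\ne 0$, so $A_\mu$ is defined. Denote by $x\bullet y$ the product in $A_\gamma$ and by $x\circ y$ the product in $(A_\gamma)_\delta$. Since $\omega$ is still the baric homomorphism on $A_\gamma$, the definition gives
\[
x\circ y=(1-\delta)(x\bullet y)+\tfrac12\delta\bigl(\omega(x)y+\omega(y)x\bigr).
\]
Now substitute $x\bullet y=(1-\gamma)xy+\tfrac12\gamma(\omega(x)y+\omega(y)x)$ and collect terms: the coefficient of $xy$ becomes $(1-\delta)(1-\gamma)=1-\mu$, and the coefficient of $\tfrac12(\omega(x)y+\omega(y)x)$ becomes $(1-\delta)\gamma+\delta=\gamma+\delta-\gamma\delta=\mu$. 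Hence $x\circ y=(1-\mu)xy+\tfrac12\mu(\omega(x)y+\omega(y)x)$, which is exactly the product of $A_\mu$, proving $(A_\gamma)_\delta=A_{\gamma+\delta-\gamma\delta}$.

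For the ``in particular'' clause, apply this with $\delta=\frac{\gamma}{\gamma-1}$; one must first check $\delta\ne 1$, which holds because $\frac{\gamma}{\gamma-1}=1$ would force $\gamma=\gamma-1$. Then $\gamma+\delta-\gamma\delta=\gamma+\delta(1-\gamma)=\gamma+\frac{\gamma}{\gamma-1}\cdot(1-\gamma)=\gamma-\gamma=0$, so $(A_\gamma)_{\frac{\gamma}{\gamma-1}}=A_0=A$, since the $0$-gametization leaves the multiplication unchanged. Taking $\gamma=2$ gives $\frac{\gamma}{\gamma-1}=2$, hence $(A_2)_2=A$.

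There is essentially no obstacle here: the statement is a short formal computation, and the only points requiring a word of care are the bookkeeping of the two coefficients after substitution and the verification that the indices $\mu$ and $\frac{\gamma}{\gamma-1}$ are legitimate (i.e.\ different from $1$) so that the gametized algebras are defined; both are immediate from the factorization $1-\mu=(1-\gamma)(1-\delta)$ and the hypothesis on the characteristic and size of $\FF$ (which, in fact, plays no role in this particular proposition beyond ensuring $2\ne 0$ so that $A_2$ makes sense).
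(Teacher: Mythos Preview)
Your proof is correct and follows essentially the same direct computation as the paper. The only cosmetic difference is that the paper computes the squaring $x\diamond x$ (relying on commutativity and $\operatorname{char}\ne 2$ to determine the bilinear product), whereas you compute the full bilinear product $x\circ y$; you also spell out the verification of the ``in particular'' clause, which the paper leaves implicit.
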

\begin{proof}
Denote by $\bullet$ the multiplication in $A_\gamma$ and by $\diamond$ the multiplication in $(A_\gamma)_\delta$. Then, for any  $x\in A$:
\[
\begin{split}
x\diamond x&=(1-\delta)x\bullet x+\delta\omega(x)x\\
 &=(1-\delta)(1-\gamma)x^2+((1-\delta)\gamma+\delta)\omega(x)x\\
 &=(1-(\gamma+\delta-\gamma\delta))x^2+(\gamma+\delta-\gamma\delta)\omega(x)x,
\end{split}
\]
whence the result.
\end{proof}

Given an algebra $A$, the \emph{unitization} of $A$ is the algebra $A^\sharp$ obtained by adding a formal unity to $A$: $A^\sharp=\FF 1\oplus A$, with multiplication obtained by extending the multiplication on $A$ by the condition $1x=x1=x$ for any $x\in A^\sharp$. The algebra $A$ becomes then an ideal of its unitization $A^\sharp$.

Algebras obtained by $2$-gametization are close to the original ones:

\begin{proposition}\label{pr:unitizationsAA2}
Given a baric algebra $(A,\omega)$, the unitizations  of $A$ and its $2$-gametization $A_2$ are isomorphic.
\end{proposition}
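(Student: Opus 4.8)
The plan is to write down an explicit linear isomorphism $\Phi\colon A^\sharp\to (A_2)^\sharp$ and check that it respects both multiplications. The key observation is that in the unitization $A^\sharp$ one has the homomorphism $\omega^\sharp\colon A^\sharp\to\FF$ extending $\omega$ by $\omega^\sharp(1)=1$, and the element $\epsilon:=1-a$ for $a$ of weight $1$ is a complementary idempotent to the weight-$1$ part. More precisely, $A^\sharp$ decomposes as $\FF 1\oplus A$ with $A=\ker\omega^\sharp$. My guess for the map is $\Phi(1)=1$ and $\Phi(x)=\omega(x)1 - x + \omega(x)\cdot(\text{something})$; concretely I would try $\Phi(\lambda 1 + x) = (\lambda+\omega(x))1 - x$ for $\lambda\in\FF$, $x\in A$, i.e. $\Phi$ fixes $1$ and sends $x\in A$ to $\omega(x)1-x$. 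This is visibly linear and bijective (it is an involution: $\Phi^2=\id$, matching $(A_2)_2=A$ from the first proposition).

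Then I would verify that $\Phi$ is an algebra homomorphism. Since both $A^\sharp$ and $(A_2)^\sharp$ are commutative and unital, and $\Phi(1)=1$, it suffices to check $\Phi(xy)=\Phi(x)\diamond\Phi(y)$ for $x,y\in A$, where $\diamond$ denotes the multiplication in $(A_2)^\sharp$ (extending the $2$-gametization product $x\bullet y = -xy+\tfrac12\bigl(\omega(x)y+\omega(y)x\bigr)$ of $A_2$). On one side, $\Phi(xy) = \omega(xy)1 - xy = \omega(x)\omega(y)1 - xy$. On the other side, $\Phi(x)\diamond\Phi(y) = (\omega(x)1-x)\diamond(\omega(y)1-y)$; expanding using unitality and bilinearity gives $\omega(x)\omega(y)1 - \omega(x)y - \omega(y)x + x\bullet y$, and substituting the formula for $x\bullet y$ collapses the cross terms, leaving $\omega(x)\omega(y)1 - xy$. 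So the two sides agree.

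The remaining (routine) point is to confirm that $\Phi$ respects the baric structures, i.e. $\omega^\sharp_{A_2}\circ\Phi = \omega^\sharp_A$, which is immediate from $\Phi(\lambda 1+x)=(\lambda+\omega(x))1-x$ and $\omega(x)=0$... wait, one should be careful: actually $\omega^\sharp(\Phi(\lambda 1 + x)) = \lambda+\omega(x)-\omega(x) = \lambda = \omega^\sharp(\lambda 1 + x)$, so the weights are preserved and $\Phi$ is an isomorphism of baric algebras $A^\sharp\xrightarrow{\sim}(A_2)^\sharp$ (a fortiori of algebras, which is what the statement asks).

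I do not anticipate a genuine obstacle here; the only thing requiring care is the bookkeeping of the unit: one must extend $\omega$ to $A^\sharp$ correctly and remember that the product $\bullet$ on $A_2$ is only defined on $A$, so that the products $1\diamond x$ in $(A_2)^\sharp$ are governed by unitality rather than by the gametization formula. If the naive guess for $\Phi$ above needed adjusting, the natural alternative is to use the idempotent $e=\tfrac12(1-\text{(weight-one element)})$-type decomposition, but I expect the involution $x\mapsto\omega(x)1-x$ to work verbatim, and its being an involution dovetails nicely with $(A_2)_2=A$.
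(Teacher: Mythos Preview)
Your approach is exactly the paper's: the same map $\Phi(1)=1$, $\Phi(x)=\omega(x)1-x$, checked to be a bijective algebra homomorphism. One slip to fix: for $\gamma=2$ the gametization product is $x\bullet y=-xy+\omega(x)y+\omega(y)x$ (the coefficient $\tfrac{\gamma}{2}$ equals $1$, not $\tfrac12$); with your stated formula the cross terms would \emph{not} cancel, but with the correct one your computation goes through verbatim.
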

\begin{proof}
Consider the linear map $\varphi: A^\sharp\rightarrow (A_2)^\sharp$, defined by $\varphi(1)=1$ and $\varphi(x)=\omega(x)1-x$. Then, for any $x\in A$, we have
\[
\begin{split}
\varphi(x)^{\bullet 2}&=(\omega(x)1-x)^{\bullet 2}
=\omega(x)^21+\bigl(-2\omega(x)x+x^{\bullet 2}\bigr)\\
&=\omega(x^2)1-x^2
=\varphi(x^2).
\end{split}
\]
Since $A$ is commutative, it follows that $\varphi$ is a homomorphism, which is clearly bijective.
\end{proof}

\begin{corollary}
Let $(A,\omega)$ be a baric algebra. Then $A$ is a Jordan algebra if and only if so is its $2$-gametization $A_2$.
\end{corollary}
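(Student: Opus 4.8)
The plan is to deduce this corollary directly from Proposition~\ref{pr:unitizationsAA2}, using two standard facts about Jordan algebras: first, that over a field of characteristic $\ne 2$ an algebra is a Jordan algebra if and only if its unitization is a Jordan algebra (the Jordan identity and commutativity are preserved by passing to and from the unitization, since $1$ commutes and associates with everything); and second, that a Jordan algebra is, by definition, commutative and satisfies the Jordan identity $(x^2 y)x = x^2(yx)$, so the property is transported by any algebra isomorphism. Both of these are elementary enough that I would state them in one sentence without separate proof.

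First I would observe that, by Proposition~\ref{pr:unitizationsAA2}, the unitizations $A^\sharp$ and $(A_2)^\sharp$ are isomorphic as algebras. Then I would invoke the fact that an isomorphism carries Jordan algebras to Jordan algebras, so $A^\sharp$ is a Jordan algebra if and only if $(A_2)^\sharp$ is. Finally I would apply the equivalence ``$B$ is a Jordan algebra $\iff$ $B^\sharp$ is a Jordan algebra'' twice, once for $B = A$ and once for $B = A_2$, to conclude that $A$ is a Jordan algebra if and only if $A_2$ is. The chain of equivalences
\[
A \text{ Jordan} \iff A^\sharp \text{ Jordan} \iff (A_2)^\sharp \text{ Jordan} \iff A_2 \text{ Jordan}
\]
gives the result.

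The only point requiring a word of care is the equivalence between a (possibly non-unital) algebra being Jordan and its unitization being Jordan. One direction is immediate: $A$ is a subalgebra — indeed an ideal — of $A^\sharp$, so if $A^\sharp$ is Jordan then so is $A$. For the converse, if $A$ is commutative and satisfies the Jordan identity, then $A^\sharp = \FF 1 \oplus A$ is commutative, and one checks the Jordan identity on $A^\sharp$ by expanding $x = \lambda 1 + a$ and using that $1$ is a unit: the identity is multilinear in its ``inner'' arguments after accounting for the $x^2$ term, and every term involving a factor of $1$ reduces to a lower-degree instance, all of which hold in $A$. This is the main (though still routine) obstacle; since characteristic $\ne 2$ is assumed throughout the paper, linearization arguments of this kind are available and I would simply cite this as a well-known fact about Jordan algebras rather than reproduce the verification.
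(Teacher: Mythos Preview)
Your proposal is correct and follows exactly the same route as the paper: invoke the standard fact that an algebra is Jordan if and only if its unitization is, and then use Proposition~\ref{pr:unitizationsAA2} to transport the property across the isomorphism $A^\sharp\cong(A_2)^\sharp$. The paper's own proof is just the two-line version of your argument; your extra paragraph justifying the equivalence $A$ Jordan $\iff A^\sharp$ Jordan is accurate but more detail than the paper chose to include.
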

\begin{proof}
An algebra $A$ is a Jordan algebra if and only if so is its unitization. Now the result follows from the preceding proposition.
\end{proof}

The next result relates the conditions \eqref{eq:omega3}, \eqref{eq:omega} and \eqref{eq:omegas} and the gametization process.

\begin{proposition}\label{pr:gametization}
Let $(A,\omega)$ be a baric algebra. Then the following conditions hold:
\begin{itemize}
\item $A$ satisfies \eqref{eq:omega} if and only if $A_{-1}$ satisfies \eqref{eq:omega3}.
\item $A$ satisfies \eqref{eq:omega} if and only if $A_2$ satisfies \eqref{eq:omegas}.
\end{itemize}
\end{proposition}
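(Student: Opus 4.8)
The plan is to compute, for a baric algebra $(A,\omega)$ and a scalar $1\ne\gamma\in\FF$, the fourth power $x\bullet(x\bullet x)$ and the square-of-square $(x\bullet x)\bullet(x\bullet x)$ in $A_\gamma$ purely in terms of the original operations $x^2$, $x^3=x(x^2)$, $x^2x^2$, the scalar $\omega(x)$, and $\gamma$. Since every term appearing will be a linear combination of $x^2x^2$, $\omega(x)x^3$, $\omega(x)^2x^2$, and $\omega(x)^3x$, each of the identities \eqref{eq:omega3}, \eqref{eq:omega}, \eqref{eq:omegas} for $A_\gamma$ becomes, after substituting the hypothesis on $A$, a polynomial identity that is automatically satisfied, or a genuine constraint. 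The two asserted equivalences should then fall out by choosing $\gamma=-1$ and $\gamma=2$ respectively and matching coefficients.

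Concretely, write $e:=\omega(x)$ throughout (an element of $\FF$, since $\omega$ is a homomorphism, so $\omega(x^2)=e^2$, etc.). From $x\bullet x=(1-\gamma)x^2+\gamma e\,x$ one gets
\[
x\bullet(x\bullet x)=(1-\gamma)^2 x^3+\Bigl((1-\gamma)\gamma+\tfrac12\gamma(1-\gamma)+\tfrac12\gamma\Bigr)e\,x^2+\tfrac12\gamma^2 e^2 x
\]
after applying the $\bullet$-multiplication rule to $x$ and $x\bullet x$ and collecting; I will double-check the middle coefficient, but the point is that $x^{\bullet 3}=a\,x^3+b\,e\,x^2+c\,e^2x$ for explicit $a,b,c\in\FF[\gamma]$. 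Similarly, $(x\bullet x)\bullet(x\bullet x)$ expands, using $x\bullet x=(1-\gamma)x^2+\gamma e\,x$ in both slots, into
\[
(1-\gamma)^2 x^2x^2+2(1-\gamma)\gamma\, e\, x^3+\gamma^2 e^2 x^2+\gamma(1-\gamma)e^2x^2+\gamma^2 e^3 x,
\]
again up to a careful re-collection of the cross terms $\tfrac12\gamma\bigl(\omega(u)v+\omega(v)u\bigr)$ with $u=v=x\bullet x$. Now impose the hypothesis: if $A$ satisfies \eqref{eq:omega}, i.e.\ $x^2x^2=e\,x^3$, substitute this into the expression for $(x\bullet x)\bullet(x\bullet x)$ and into $x^{\bullet 3}$ (noting $x^3$ is unchanged by the substitution and $x^{\bullet 3}$ still involves $x^3$, $e x^2$, $e^2x$) and check that the resulting identity in $A_\gamma$ is precisely \eqref{eq:omega3} when $\gamma=-1$ and precisely \eqref{eq:omegas} when $\gamma=2$; conversely, since the gametization is invertible — $(A_{-1})$ back to $A$ via Proposition on $(A_\gamma)_{\gamma/(\gamma-1)}=A$, and $(A_2)_2=A$ — the reverse implications come for free by applying the forward direction to $A_{-1}$, resp.\ $A_2$, in place of $A$.

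The main obstacle is bookkeeping: correctly expanding $(x\bullet x)\bullet(x\bullet x)$ and $x\bullet(x\bullet x)$ without sign or coefficient errors, since the $\bullet$-product has three terms and the arguments are themselves two-term expressions, so one is collecting roughly a dozen monomials in $x^2x^2,\,ex^3,\,e^2x^2,\,e^3x$ with coefficients that are quadratics in $\gamma$. A clean way to organize this is to first record the general formula $x^{\bullet n}$-type expansions once and for all, or equivalently to observe that on the subspace $\FF\text{-span}\{x, x^2, x^3, x^2x^2\}$ (together with the scalar $e$) the $\bullet$-operation is determined by the simple rule above, reducing everything to symbolic algebra in $\FF[\gamma, e]$; then the two substitutions $\gamma=-1$, $\gamma=2$ are a one-line check each. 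It may also streamline the write-up to prove just the first bullet by direct computation and derive the second from it by noting that \eqref{eq:omega3} for $A_{-1}$ is equivalent, via the coefficient identities already obtained, to \eqref{eq:omegas} for $(A_{-1})_{3/2}\cdot$something — but since $(A_{-1})$ and $(A_2)$ are not directly composable to each other cleanly, I expect the cleanest route is simply two independent coefficient matchings, each using the invertibility of gametization for the converse.
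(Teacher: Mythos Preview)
Your approach is essentially the paper's: expand $x^{\bullet 2}\bullet x^{\bullet 2}$ and $x^{\bullet 3}$ in the gametized algebra in terms of $x^2x^2$, $x^3$, $x^2$, $x$ with coefficients in $\omega(x)$. The paper specializes to $\gamma=-1$ and $\gamma=2$ from the outset rather than carrying a general $\gamma$, which keeps the arithmetic shorter, but this is only a stylistic difference.

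There is, however, one genuine gap in your plan. The paper does \emph{not} substitute the hypothesis and then verify the target identity; instead it computes, for $\gamma=-1$,
\[
x^{\bullet 2}\bullet x^{\bullet 2}=8\bigl(x^2x^2-\omega(x)x^3\bigr)+\omega(x)^3x,
\]
so the failure of \eqref{eq:omega3} in $A_{-1}$ is a nonzero scalar multiple of the failure of \eqref{eq:omega} in $A$, and both directions follow at once. Similarly, for $\gamma=2$ one obtains
\[
x^{\diamond 2}\diamond x^{\diamond 2}-\omega(x)x^{\diamond 3}=-\bigl(x^2x^2-3\omega(x)x^3+3\omega(x)^2x^2-\omega(x)^3x\bigr),
\]
showing directly that $A_2$ satisfies \eqref{eq:omega} if and only if $A$ satisfies \eqref{eq:omegas}; the involution $(A_2)_2=A$ then swaps the roles.

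Your converse argument, by contrast, does not work as written. You propose to prove the forward implication and then ``apply the forward direction to $A_{-1}$, resp.\ $A_2$, in place of $A$''. But if the forward direction reads ``$B$ satisfies \eqref{eq:omega} $\Rightarrow$ $B_{-1}$ satisfies \eqref{eq:omega3}'', then taking $B=A_{-1}$ gives ``$A_{-1}$ satisfies \eqref{eq:omega} $\Rightarrow$ $(A_{-1})_{-1}$ satisfies \eqref{eq:omega3}'', which is irrelevant; and the inverse of $(-1)$-gametization is $\tfrac{1}{2}$-gametization, not $(-1)$-gametization again. The same issue arises for $\gamma=2$: the forward direction applied to $A_2$ gives ``$A_2$ satisfies \eqref{eq:omega} $\Rightarrow$ $A$ satisfies \eqref{eq:omegas}'', which is again not the needed converse. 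The cleanest repair is exactly what the paper does: do not substitute the hypothesis, but keep the computation as an identity between the two ``defect'' expressions, so that the equivalence is immediate.
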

\begin{proof}
Denote by juxtaposition the multiplication in $A$, and by $\bullet$ and $\diamond$ the multiplications in the gametizations $A_{-1}$ and $A_2$ respectively. Then for any $x\in A$:
\[
\begin{split}
x^{\bullet 2}\bullet x^{\bullet 2}
 &=2(x^{\bullet 2})^2-\omega(x)^2x^{\bullet 2}\\
 &=2\bigl(2x^2-\omega(x)x\bigr)^2-\omega(x)^2\bigl(2x^2-\omega(x)x\bigr)\\
 &=8x^2x^2-8\omega(x)x^3+2\omega(x)^2x^2-2\omega(x)^2x+\omega(x)^3x\\
 &=8\bigl(x^2x^2-\omega(x)x^3\bigr)+\omega(x)^3x,
\end{split}
\]
so that $A$ satisfies \eqref{eq:omega} if and only if $A_{-1}$ satisfies \eqref{eq:omega3}.

In the same vein we compute:
\[
\begin{split}
x^{\diamond 2}\diamond x^{\diamond 2}
 &=-(-x^2+2\omega(x)x)^2+2\omega(x)^2(-x^2+2\omega(x)x)\\
 &=-x^2x^2+4\omega(x)x^3-6\omega(x)x^2x^2+4\omega(x)^3x,
\end{split}
\]
and hence, since $x^{\diamond 3}=x^{\diamond 2}\diamond x=-x^{\diamond 2}x+\omega(x)x^{\diamond 2}+\omega(x)^2x=x^3-3\omega(x)x^2+3\omega(x)^2x$, we obtain
\[
x^{\diamond 2}\diamond x^{\diamond 2}-\omega(x)x^{\diamond 3}=
-\bigl(x^2x^2-3\omega(x)x^3+3\omega(x)^2x^2-\omega(x)^3x\bigr),
\]
so that $A$ satisfies \eqref{eq:omegas} if and only if $A_2$ satisfies \eqref{eq:omega}. As $(A_2)_2=A$, this finishes the proof.
\end{proof}


\section{Jordan baric algebras}\label{se:Jordan}

Recall from \cite{EO} that if $A$ is a commutative algebra over an infinite field of characteristic different from $2$, endowed with a symmetric bilinear form $\langle .\mid .\rangle$ which is `associative' (i.e., $\langle xy\mid z\rangle=\langle x\mid yz\rangle$ for any $x,y,z$), satisfies $x^2x^2=\langle x\mid x^2\rangle x$ for any $x$, and such that the cubic form $\langle x\mid x^2\rangle$ is nonzero, then by linearization we obtain:
\begin{equation}\label{eq:4x2xy}
4x^2(xy)=3\langle x^2\mid y\rangle x+\langle x^2\mid x\rangle y,
\end{equation}
for any $x,y\in A$. Substitute $x$ by $x^2$ in \eqref{eq:4x2xy} to get:
\[
4\langle x\mid x^2\rangle x(x^2y)=3\langle x\mid x^2\rangle\langle x\mid y\rangle x^2 +\langle x\mid x^2\rangle \langle x\mid x^2\rangle y.
\]
The set of those elements $x\in A$ with $\langle x\mid x^2\rangle \ne 0$ is dense in the Zariski topology, so we conclude:
\begin{equation}\label{eq:4xx2y}
4x(x^2y)=3\langle x\mid y\rangle x^2+\langle x\mid x^2\rangle y,
\end{equation}
for any $x,y\in A$. (For the definition and main features of the Zariski topology on not necessarily finite dimensional spaces, the reader may consult \cite{McC}.)

On the formal direct sum $\calJ(A)=\FF 1\oplus A$ define (\cite[Equation (20)]{EO}) the commutative multiplication given by:
\begin{equation}\label{eq:J(A)}
1\cdot 1=1,\ 1\cdot x=x,\ x\cdot y=\frac{3}{4}\langle x\mid y\rangle 1+ xy,
\end{equation}
for any $x,y\in A$. Then it is shown in \cite{EO} that $\calJ(A)$ is a Jordan algebra. Let us include the proof for completeness. Take $X=\alpha 1+x$, $Y=\beta 1+y$ in $\calJ(A)$. Then, if $(X,Y,Z)^\cdot=(X\cdot Y)\cdot Z-X\cdot (Y\cdot Z)$ denotes the associator on $\calJ(A)$, by commutativity $(X,Y,X)^\cdot =0=(1,X,Y)^\cdot=(X,1,Y)^\cdot$, and hence
\[
\begin{split}
(X^{\cdot 2},Y,X)^\cdot&=(x^2,y,x)^\cdot\\
 &=(x^2\cdot y)\cdot x-x^2\cdot (y\cdot x)\\
 &=(x^2y)\cdot x+\frac{3}{4}\langle x^2\mid y\rangle x-x^2\cdot (yx)-\frac{3}{4}\langle x\mid y\rangle x^2\\
 &=(x^2y)x+\frac{3}{4}\langle x^2y\mid x\rangle 1+\frac{3}{4}\langle x^2\mid y\rangle x
 -x^2(yx)-\frac{3}{4}\langle x^2\mid yx\rangle 1 \\
& \quad-\langle x\mid y\rangle x^2\\
 &=(x^2y)x-x^2(yx)+\frac{3}{4}\langle x^2\mid y\rangle x-\frac{3}{4}\langle x\mid y\rangle x^2,
\end{split}
\]
and this is $0$ by equations \eqref{eq:4x2xy} and \eqref{eq:4xx2y}.

As an immediate consequence we have the following result:

\begin{proposition}\label{pr:omega3Jordan}
Let $(A,\omega)$ be a baric algebra satisfying \eqref{eq:omega3}, then the vector space $\calJ(A)=\FF 1\oplus A$, with multiplication given by
\[
1\cdot 1=1,\ 1\cdot x=x,\ x\cdot y=\frac{3}{4}\omega(x)\omega(y) 1+ xy,
\]
is a Jordan algebra.
\end{proposition}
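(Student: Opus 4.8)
The plan is to derive the proposition directly from the construction recalled just above, by taking as the associative symmetric bilinear form on $A$ the form $\langle x\mid y\rangle\bydef\omega(x)\omega(y)$. The bulk of the work — that $\calJ(A)$ with the product \eqref{eq:J(A)} is a Jordan algebra whenever $\langle\cdot\mid\cdot\rangle$ is symmetric, associative, and satisfies $x^2x^2=\langle x\mid x^2\rangle x$ with $\langle x\mid x^2\rangle$ a nonzero cubic form — has already been carried out above, so essentially nothing remains but to check that this particular form qualifies.

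So, first I would verify the hypotheses for $\langle x\mid y\rangle=\omega(x)\omega(y)$. Symmetry and bilinearity are clear from the linearity of $\omega$. The associativity $\langle xy\mid z\rangle=\langle x\mid yz\rangle$ reads $\omega(xy)\omega(z)=\omega(x)\omega(yz)$, which holds because $\omega$ is an algebra homomorphism, so $\omega(xy)=\omega(x)\omega(y)$. Next, $\langle x\mid x^2\rangle=\omega(x)\omega(x^2)=\omega(x)^3$ is a nonzero cubic form since $\omega\ne 0$, and with this choice the adjoint identity $x^2x^2=\langle x\mid x^2\rangle x$ is precisely \eqref{eq:omega3}. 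Finally, substituting $\langle x\mid y\rangle=\omega(x)\omega(y)$ into \eqref{eq:J(A)} yields exactly the multiplication $x\cdot y=\frac34\omega(x)\omega(y)1+xy$ of the statement; hence, over an infinite field, the proposition is immediate.

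It remains to treat a finite ground field, which, under the standing hypotheses recalled in the introduction, still has at least $4$ elements and characteristic $\ne 2$. Here I would pass to an infinite extension $K$ of $\FF$: by those same hypotheses $(A\otimes_\FF K,\,\omega\otimes\id)$ is again a baric algebra satisfying \eqref{eq:omega3}, and $\calJ(A\otimes_\FF K)$ is canonically identified with $\calJ(A)\otimes_\FF K$. By the infinite-field case the latter is a Jordan algebra, and since the Jordan identity is multilinear and $\calJ(A)$ embeds into $\calJ(A)\otimes_\FF K$, the algebra $\calJ(A)$ is itself a Jordan algebra. The one genuinely delicate ingredient is the Zariski-density step used above to pass from the substituted adjoint identity to \eqref{eq:4xx2y}; that is exactly why the construction was stated over infinite fields, and the scalar-extension argument just given is what absorbs that restriction, the rest being routine substitution.
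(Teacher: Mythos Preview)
Your proof is correct and follows exactly the paper's own approach: set $\langle x\mid y\rangle=\omega(x)\omega(y)$, check the hypotheses of the general $\calJ(A)$ construction, and extend scalars to an infinite field to cover the finite-field case. The paper's proof is simply a two-sentence version of what you wrote out in detail.
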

\begin{proof}
We may extend scalars to insure that the ground field is infinite, and then the hypotheses above are satisfied with $\langle x\mid y\rangle =\omega(x)\omega(y)$.
\end{proof}

Therefore, the baric algebras satisfying \eqref{eq:omega3} are close to be Jordan algebras. For the baric algebras satisfying \eqref{eq:omega} or \eqref{eq:omegas} the situation is even better:

\begin{theorem}\label{th:omegasJordan}
Let $(A,\omega)$ be a baric algebra. Then the following conditions hold:
\begin{itemize}
\item If $(A,\omega)$ satisfies \eqref{eq:omega}, then its unitization $A^\sharp$ is isomorphic to $\calJ(A_{-1})$. (Recall that in this case $A_{-1}$ satisfies \eqref{eq:omega3} by Proposition \ref{pr:gametization}.)
\item If $(A,\omega)$ satisfies \eqref{eq:omegas}, then its unitization $A^\sharp$ is isomorphic to $\calJ(A_3)$. (Here the gametization $A_3=(A_2)_{-1}$ also satisfies \eqref{eq:omega3} by Proposition \ref{pr:gametization}.)
\item If $(A,\omega)$ satisfies either \eqref{eq:omega} or \eqref{eq:omegas}, then $A$ is a Jordan algebra.
\end{itemize}
\end{theorem}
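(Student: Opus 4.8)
The plan is to exploit the gametization machinery already assembled, reducing everything to the single fact proved just above, namely that $\calJ(A)$ is a Jordan algebra whenever $A$ satisfies \eqref{eq:omega3}. First I would treat the case when $(A,\omega)$ satisfies \eqref{eq:omega}. By Proposition~\ref{pr:gametization} the $(-1)$-gametization $A_{-1}$ satisfies \eqref{eq:omega3}, so by Proposition~\ref{pr:omega3Jordan} the algebra $\calJ(A_{-1})=\FF 1\oplus A_{-1}$, with product $x\cdot y=\tfrac34\omega(x)\omega(y)1+x\bullet y$ (where $\bullet$ is the multiplication of $A_{-1}$), is a Jordan algebra. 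The key step is to write down an explicit isomorphism $A^\sharp\to\calJ(A_{-1})$: I expect the map sending $1\mapsto 1$ and $x\mapsto x$ on $A$ (or possibly a slight rescaling) to work. One checks it respects products by computing $x\cdot y$ in $\calJ(A_{-1})$: since $x\bullet y=2xy-\tfrac12\bigl(\omega(x)y+\omega(y)x\bigr)$ for the $(-1)$-gametization, the cross terms must conspire so that $\tfrac34\omega(x)\omega(y)1+x\bullet y$ matches the image of $xy$ under whatever candidate map we use; this is a short bilinear computation, and the $\tfrac34$ in the definition of $\calJ$ is exactly calibrated to make it close.

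For the case when $(A,\omega)$ satisfies \eqref{eq:omegas}, I would first pass to $A_2$. By the last line of the proof of Proposition~\ref{pr:gametization}, $A$ satisfies \eqref{eq:omegas} if and only if $A_2$ satisfies \eqref{eq:omega}; and then applying the first bullet to $A_2$, the algebra $A_3=(A_2)_{-1}$ satisfies \eqref{eq:omega3} (which is consistent with the parenthetical remark, since $2+(-1)-2\cdot(-1)=3$). Thus $\calJ(A_3)$ is Jordan by Proposition~\ref{pr:omega3Jordan}, and $(A_2)^\sharp\cong\calJ(A_3)$ by the first bullet applied to $A_2$. It then remains to relate $A^\sharp$ to $(A_2)^\sharp$: this is precisely Proposition~\ref{pr:unitizationsAA2}, which gives $A^\sharp\cong (A_2)^\sharp$. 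Composing, $A^\sharp\cong\calJ(A_3)$, as claimed.

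The third bullet is then immediate from the first two together with the corollary to Proposition~\ref{pr:unitizationsAA2}: in either case $A^\sharp$ is isomorphic to a Jordan algebra, hence is itself a Jordan algebra, and an algebra is Jordan if and only if its unitization is (one direction is trivial since $A$ is a subalgebra of $A^\sharp$; the other is the standard fact quoted in the corollary's proof). Therefore $A$ is a Jordan algebra.

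The main obstacle I anticipate is bookkeeping rather than conceptual difficulty: one must be careful about which multiplication ($xy$ in $A$, $\bullet$ in $A_{-1}$, $\diamond$ in $A_2$, and the Jordan product $\cdot$ in $\calJ$) appears at each stage, and verify that the explicit linear map realizing $A^\sharp\cong\calJ(A_{-1})$ is genuinely multiplicative — the single place where a sign or a factor of $2$ could go wrong. Given that the constants in the gametization formula and in the definition \eqref{eq:J(A)} of $\calJ$ are both fixed in advance, there is essentially no freedom, so the verification is a finite check; I would record it as one displayed computation of $\varphi(x)\cdot\varphi(y)$ versus $\varphi(xy)$, analogous to the computation in the proof of Proposition~\ref{pr:unitizationsAA2}, and leave the routine algebra to the reader.
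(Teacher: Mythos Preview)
Your overall architecture matches the paper's exactly: construct an explicit isomorphism $A^\sharp\to\calJ(A_{-1})$ for the first bullet, deduce the second bullet by combining the first with Proposition~\ref{pr:unitizationsAA2} applied to $A_2$, and read off the third bullet from Proposition~\ref{pr:omega3Jordan}. Parts two and three of your outline are essentially verbatim what the paper does.

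The one genuine gap is in the first bullet: neither the identity $x\mapsto x$ nor any pure rescaling $x\mapsto cx$ gives a homomorphism $A^\sharp\to\calJ(A_{-1})$. Indeed, for $x\in A$ the product in $\calJ(A_{-1})$ has a nonzero $1$-component $\tfrac34\omega(x)^2$, so $(cx)\cdot(cx)$ can never equal $c\,x^2\in A$ unless $c=0$. The correct map, and the actual content of the paper's proof of this part, is
\[
\Phi(1)=1,\qquad \Phi(x)=\tfrac14\omega(x)\,1+\tfrac12 x\quad(x\in A),
\]
which mixes a rescaling with a shift along $\FF 1$. The verification that $\Phi(x)^{\cdot 2}=\Phi(x^2)$ uses $x\bullet x=2x^2-\omega(x)x$, i.e.\ $x^2=\tfrac12(x\bullet x+\omega(x)x)$, and the $\tfrac14$ is forced so that the $1$-components $\tfrac{1}{16}\omega(x)^2+\tfrac14\cdot\tfrac34\omega(x)^2$ add up to $\tfrac14\omega(x^2)$. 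So your instinct that ``the constants leave essentially no freedom'' is right, but the ansatz must be $x\mapsto a\,\omega(x)1+b\,x$ rather than $x\mapsto b\,x$; once you allow the $\omega(x)1$ term (as in Proposition~\ref{pr:unitizationsAA2}, which you cite as the model), the computation does close with $(a,b)=(\tfrac14,\tfrac12)$.
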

\begin{proof}
If $(A,\omega)$ satisfies \eqref{eq:omega}, denote by $\bullet$ the product in its gametization $A_{-1}$, so that $x\bullet x=2x^2-\omega(x)x$, or $x^2=\frac{1}{2}\bigl(x\bullet x+\omega(x)x\bigr)$. Then the linear map $\Phi:A\rightarrow \calJ(A_{-1})$, $x\mapsto \frac{1}{4}\omega(x)1+\frac{1}{2}x$, is an algebra homomorphism, since
\[
\begin{split}
\Phi(x)^2&=\Bigl(\frac{1}{4}\omega(x)1+\frac{1}{2}x\Bigr)^2\\
 &=\left(\frac{1}{16}\omega(x)^2+\frac{1}{4}\frac{3}{4}\omega(x)^2\right)1+\left(\frac{1}{4}x\bullet x +2\frac{1}{4}\frac{1}{2}\omega(x)x\right)\\
 &=\frac{1}{4}\omega(x)^21+\frac{1}{4}\left(x\bullet x+\omega(x)x\right)\\
 &=\frac{1}{4}\omega(x^2)1+\frac{1}{2}x^2=\Phi(x^2).
\end{split}
\]
This map $\Phi$ extends trivially to an algebra isomorphism $\Phi:A^\sharp\rightarrow \calJ(A_{-1})$ by taking $\Phi(1)=1$, thus proving the first part.

For the second part note that if $(A,\omega)$ satisfies \eqref{eq:omegas}, then its unitization $A^\sharp$ is isomorphic to the unitization $(A_2)^\sharp$ (Proposition
\ref{pr:unitizationsAA2}), and we apply the first part.

Finally, the third part is a direct consequence of the previous two parts and Proposition \ref{pr:omega3Jordan}.
\end{proof}

As mentioned in the Introduction, this theorem provides a unified and simple proof of the fact that the baric algebras satisfying either \eqref{eq:omega} or \eqref{eq:omegas} are indeed Jordan algebras, with no need to rely on properties of Peirce decompositions.


\section{Idempotents}\label{se:idempotents}

Let $(A,\omega)$ be a baric algebra satisfying \eqref{eq:omega}. If $e$ is a nonzero idempotent of $A$, then $e=e^2e^2=\omega(e)e^3=\omega(e)e$, so that $\omega(e)=1$ and $e=e^3$. On the other hand,  since $A$ is a  Jordan algebra (Theorem \ref{th:omegasJordan}), it is power-associative, so for any $x\in A$ we have $x^3x^3=x^6=(x^2x^2)x^2=\omega(x)x^3x^2=\omega(x)(x^2x^2)x=\omega(x)^2x^4=\omega(x)^3x^3$. In particular, if $\omega(x)=1$ we obtain that $x^3$ is an idempotent. This proves the first part of the following result:

\begin{theorem}\label{th:idempotents}
Let $(A,\omega)$ be a baric algebra.
\begin{itemize}
\item
If $(A,\omega)$ satisfies \eqref{eq:omega}, then the set of nonzero idempotents of $A$ is the set $\{x^3:\omega(x)=1\}$.
\item
If $(A,\omega)$ satisfies \eqref{eq:omega3}, then the set of nonzero idempotents of $A$ is the set $\{\frac{1}{4}x^3+\frac{3}{8}x^2+\frac{3}{8}x : \omega(x)=1\}$.
\item
If $(A,\omega)$ satisfies \eqref{eq:omegas}, then the set of nonzero idempotents of $A$ is the set $\{x^3-3x^2+3x:\omega(x)=1\}$.
\end{itemize}
\end{theorem}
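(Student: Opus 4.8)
The plan is to prove all three characterizations together, exploiting the gametization machinery established in Section \ref{se:gametization} so that the second and third statements follow mechanically from the first. The first bullet was already disposed of in the text: for a baric algebra satisfying \eqref{eq:omega}, any nonzero idempotent $e$ forces $\omega(e)=1$ and $e=e^3$, while power-associativity (via Theorem \ref{th:omegasJordan}) shows that $x^3$ is idempotent whenever $\omega(x)=1$. So the real content left to write is the transfer of this description through the gametizations $A_{-1}$ (for \eqref{eq:omega3}) and $A_3=(A_2)_{-1}$ (for \eqref{eq:omegas}).

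First I would treat \eqref{eq:omega3}. Let $(B,\omega)$ be a baric algebra satisfying \eqref{eq:omega3}; by Proposition \ref{pr:gametization}, $B=A_{-1}$ for $A=B_{-1/2}$... more cleanly: the map $\gamma\mapsto\frac{\gamma}{\gamma-1}$ shows $(A_{-1})_{1/2}=A$, so if $B$ satisfies \eqref{eq:omega3} then $B_{1/2}$ satisfies \eqref{eq:omega}, and conversely $A_{-1}$ satisfies \eqref{eq:omega3} when $A$ satisfies \eqref{eq:omega}. The key elementary observation is that $e$ is an idempotent of a baric algebra $C$ with $\omega(e)=1$ if and only if $e$ is an idempotent of $C_\gamma$ with $\omega(e)=1$: indeed $e\bullet e=(1-\gamma)e^2+\gamma\omega(e)e=(1-\gamma)e^2+\gamma e$, which equals $e$ exactly when $e^2=e$ (using $\gamma\neq1$). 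Thus the nonzero idempotents of $A_{-1}$ are precisely the nonzero idempotents of $A$, which by the first bullet are the elements $x^3$ with $\omega(x)=1$, where the cube is taken in $A$. It then remains to rewrite $x^3$ (the $A$-cube) in terms of the product $\bullet$ of $A_{-1}$. From $x\bullet x=2x^2-\omega(x)x$ one solves $x^2=\tfrac12(x\bullet x+\omega(x)x)$, and then $x^3=x^2x$ must be expressed through $\bullet$ as well; substituting and collecting terms (a short computation of the type already carried out in the proof of Proposition \ref{pr:gametization}) yields $x^3=\tfrac14 x^{\bullet 3}+\tfrac38 x^{\bullet 2}+\tfrac38 x$ when $\omega(x)=1$, which is exactly the claimed set. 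One must also check the bookkeeping that every element of the target set arises this way and that the condition $\omega(x)=1$ is preserved, but that is immediate since $\omega$ is unchanged under gametization.

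For \eqref{eq:omegas} I would proceed in the same fashion but through the composite $A\rightsquigarrow A_2\rightsquigarrow (A_2)_{-1}=A_3$. If $(A,\omega)$ satisfies \eqref{eq:omegas}, then $A_2$ satisfies \eqref{eq:omega} by Proposition \ref{pr:gametization}. Here there is one extra subtlety compared to the previous case: the $2$-gametization is genuinely different, since $2$-gametization and $(-1)$-gametization behave differently with respect to idempotents — for $\gamma=2$ the relation $e\bullet e=-e^2+2\omega(e)e$ shows that an element with $\omega(e)=1$ is a $\bullet$-idempotent iff $e^2=e$, so in fact the same invariance holds and the idempotents of $A$ with norm $1$ coincide with those of $A_2$ and hence with those of $A_3$. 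So the structural part is again automatic; the only remaining task is to express the cube computed in the norm-$1$ idempotent description of $A_2$ (which, by the \eqref{eq:omega3}-case applied to $A_3$, or more directly, equals $y^3$ with $y\in A_2$, $\omega(y)=1$, the cube in $A_2$) back in terms of the original multiplication of $A$. Using $x^{\diamond 2}=-x^2+2\omega(x)x$ and $x^{\diamond 3}=x^3-3\omega(x)x^2+3\omega(x)^2x$ from the proof of Proposition \ref{pr:gametization}, and then pushing through the further $(-1)$-gametization as in the previous paragraph, one arrives at the set $\{x^3-3x^2+3x:\omega(x)=1\}$.

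The main obstacle is purely computational: keeping track of which product each power is taken in as one composes two gametizations, and verifying that after all substitutions the resulting polynomial in $x$ (with the constraint $\omega(x)=1$) collapses to the stated expression. The conceptual steps — the invariance of norm-$1$ idempotents under any $\gamma$-gametization with $\gamma\neq1$, and the reduction of \eqref{eq:omega3} and \eqref{eq:omegas} to \eqref{eq:omega} — are short. I would organize the write-up by first proving the invariance lemma (one line), then doing the \eqref{eq:omega3} substitution, then the \eqref{eq:omegas} substitution, each time simplifying under $\omega(x)=1$.
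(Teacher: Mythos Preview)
Your proposal is correct and follows essentially the same route as the paper: prove the invariance of weight-$1$ idempotents under $\gamma$-gametization, then transport the description $\{x^3:\omega(x)=1\}$ from the \eqref{eq:omega} case through the appropriate gametization and rewrite the cube in the target product. The paper does exactly this, using $A_{1/2}$ for the \eqref{eq:omega3} case and $A_2$ directly for the \eqref{eq:omegas} case (quoting the formula $x^{\diamond 3}=x^3-3\omega(x)x^2+3\omega(x)^2x$); your detour through $A_3=(A_2)_{-1}$ in the third bullet is unnecessary, as you yourself note.
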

\begin{proof}
Note first that if $(A,\omega)$ is a baric algebra satisfying one of the conditions \eqref{eq:omega}, \eqref{eq:omega3} or \eqref{eq:omegas}, then any nonzero idempotent $e$ satisfies $\omega(e)=1$. Also, if $e$ is an idempotent of a baric algebra $(A,\omega)$ with $\omega(e)=1$, then $e$ remains an idempotent in any gametization $A_\gamma$.

Now, the first part has already been proved. For the second part, if $(A,\omega)$ satisfies \eqref{eq:omega3} and $e$ is an idempotent in $A$, then $e=x^{\bullet 3}$ for some $x\in A$ with $\omega(x)=1$, where $x\bullet y$ denotes the multiplication in the gametization $A_{\frac{1}{2}}$. (Note that $\frac{1}{2}=\frac{-1}{-1-1}$, so $(A_{\frac{1}{2}})_{-1}=A$.) But we have $x^{\bullet 2}=\frac{1}{2}x^2+\frac{1}{2}\omega(x)x$, and $x^{\bullet 3}=\frac{1}{2}x^{\bullet 2}x+\frac{1}{4}(\omega(x^2)x+\omega(x)x^{\bullet 2})
=\frac{1}{4}x^3+\frac{3}{8}\omega(x)x^2+\frac{3}{8}\omega(x)^2x$. This proves the second part, while the proof of the last part is similar with $A_2$ instead of $A_{\frac{1}{2}}$.
\end{proof}

In particular, this theorem proves the existence of idempotents for all these baric algebras.

Given a nonzero idempotent $e$ in a baric algebra satisfying \eqref{eq:omega}, we may use the well-known properties of the Peirce decomposition for idempotents in Jordan algebras, but the situation here is very simple. The equation $x^2x^2=\omega(x)x^3$ gives, by linearization
\begin{align}
&4x^2(xy)=\omega(y)x^3+\omega(x)\bigl(2x(xy)+x^2y\bigr),\label{eq:linearization1omega}\\[4pt]
&2x^2y^2+4(xy)^2=\omega(x)\bigl(2y(yx)+y^2x\bigr)
 +\omega(y)\bigl(2x(xy)+x^2y\bigr),\label{eq:linearization2omega}\\[4pt]
&4x^2(yz)+8(xy)(xz)=2\omega(x)\bigl(x(zy)+z(xy)+y(xz)\bigr) \nonumber\\
&\qquad\qquad\qquad\qquad +\omega(y)\bigl(2x(xz)+x^2z\bigr)+\omega(z)\bigl(2x(xy)+x^2y\bigr),
\label{eq:linearization3omega}
\end{align}
for any $x,y,z\in A$.

\begin{proposition}\label{le:Peirce_omega}
Let $(A,\omega)$ be a baric algebra satisfying \eqref{eq:omega}, and let $e$ be a nonzero idempotent of $A$. Denote $\ker\omega$ by $N$. Then $N=N(\frac{1}{2})\oplus N(0)$, where $N(\lambda)=\{x\in N: ex=\lambda x\}$. Moreover, the following conditions hold:
\begin{itemize}
\item $N(\frac{1}{2})^2+N(0)^2\subseteq N(0)$, $N(\frac{1}{2})N(0)\subseteq N(\frac{1}{2})$,
\item $x^3=0$ for any $x\in N(\frac{1}{2})\cup N(0)$, $x^2(xy)=0$ for any $x\in N$,
\item
$u^2v^2=-2(uv)^2$, $u^2v=2u(uv)$, $uv^2=2(uv)v$ for any $u\in N(\frac{1}{2})$ and $v\in N(0)$.
\item The set of nonzero idempotents is $\{e+u+u^2: u\in N(\frac{1}{2})\}$.
\end{itemize}
\end{proposition}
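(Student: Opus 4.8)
The plan is to read off every assertion from the linearizations \eqref{eq:linearization1omega}--\eqref{eq:linearization3omega} of the adjoint identity \eqref{eq:omega}, specialized at the idempotent $e$. First, since $e^2=e$ (hence $e^3=e$), the adjoint identity gives $e=e^2e^2=\omega(e)e^3=\omega(e)e$, so $\omega(e)=1$ and $A=\FF e\oplus N$. Putting $x=e$ in \eqref{eq:linearization1omega} and using $e^2=e^3=e$, $\omega(e)=1$, one obtains $2e(ey)=\omega(y)e+ey$ for all $y$; on $N$ (which is stable under $y\mapsto ey$ since $\omega(ey)=\omega(y)$) this says that the operator $y\mapsto ey$ is annihilated by the separable polynomial $t(2t-1)$ (recall $\chr\FF\ne2$), hence is diagonalizable with eigenvalues in $\{0,\frac{1}{2}\}$, which is exactly the decomposition $N=N(\frac{1}{2})\oplus N(0)$.

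For the product rules I would put $x=e$, $y\in N(\lambda)$, $z\in N(\mu)$ in \eqref{eq:linearization3omega}: since $\omega(y)=\omega(z)=0$ and $ey=\lambda y$, $ez=\mu z$, that identity collapses to $e(yz)=(\lambda+\mu-4\lambda\mu)(yz)$, and taking $(\lambda,\mu)$ to be $(\frac{1}{2},\frac{1}{2})$, $(\frac{1}{2},0)$, $(0,0)$ yields $N(\frac{1}{2})^2\subseteq N(0)$, $N(\frac{1}{2})N(0)\subseteq N(\frac{1}{2})$, $N(0)^2\subseteq N(0)$ respectively. For $x\in N$, \eqref{eq:linearization1omega} reduces to $4x^2(xy)=\omega(y)x^3$; with $y=e$ and $x\in N(\lambda)$ this reads $x^3=4\lambda\,x^3$, so $x^3=0$ when $\lambda\in\{0,\frac{1}{2}\}$, and then $x^2(xy)=\frac{1}{4}\omega(y)x^3=0$. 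The three mixed relations for $u\in N(\frac{1}{2})$, $v\in N(0)$ come out the same way: $u^2v^2=-2(uv)^2$ is \eqref{eq:linearization2omega} with $x=u$, $y=v$ (both in $N$, so the right-hand side vanishes); $u^2v=2u(uv)$ is \eqref{eq:linearization3omega} with $(x,y,z)=(u,e,v)$, whose two sides become $4u(uv)$ (using $u^2(ev)=0$) and $2u(uv)+u^2v$; and $uv^2=2(uv)v$ is the same substitution with the roles of $u$ and $v$ interchanged.

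For the idempotents: that each $e+u+u^2$ (with $u\in N(\frac{1}{2})$) is idempotent follows by expanding $(e+u+u^2)^2$ and using $e(u^2)=0$, $u^3=0$, $u^2u^2=\omega(u)u^3=0$; it collapses to $e+u+u^2$. Conversely, any nonzero idempotent $f$ satisfies $f=f^2f^2=\omega(f)f^3=\omega(f)f$, so $\omega(f)=1$ and $f=e+f_{1/2}+f_0$ with $f_{1/2}\in N(\frac{1}{2})$, $f_0\in N(0)$; computing $f^2$ with the product rules above and equating it with $f$ yields $f_{1/2}f_0=0$ and $f_0=f_{1/2}^2+f_0^2$. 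Multiplying the latter by $f_0$ (using $f_0^3=0$) gives $f_0^2=f_{1/2}^2f_0$; multiplying it by $f_{1/2}^2$ (using $(f_{1/2}^2)^2=0$) gives $f_{1/2}^2f_0=f_{1/2}^2f_0^2$; and squaring it (using $(f_{1/2}^2)^2=(f_0^2)^2=0$) gives $f_0^2=2f_{1/2}^2f_0^2$. Hence $f_0^2=f_{1/2}^2f_0^2=2f_{1/2}^2f_0^2$, so $f_{1/2}^2f_0^2=0$, whence $f_0^2=0$ and $f_0=f_{1/2}^2$, i.e. $f=e+f_{1/2}+f_{1/2}^2$. (This is consistent with the description $\{x^3:\omega(x)=1\}$ in Theorem~\ref{th:idempotents}, since an idempotent equals its own cube.)

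I expect the only delicate point to be this last step --- forcing $f_0^2=0$, and hence $f_0=f_{1/2}^2$ --- which requires combining the three nilpotency facts $f_0^3=0$, $(f_{1/2}^2)^2=0$, $(f_0^2)^2=0$ in the right order; everything else is simply a matter of substituting suitable values of $x,y,z$ into \eqref{eq:linearization1omega}--\eqref{eq:linearization3omega}.
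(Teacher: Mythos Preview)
Your proof is correct and follows the same approach as the paper---specialize the linearizations \eqref{eq:linearization1omega}--\eqref{eq:linearization3omega} at $e$---with your unified formula $e(yz)=(\lambda+\mu-4\lambda\mu)yz$ from \eqref{eq:linearization3omega} being a bit slicker than the paper's case split (which uses \eqref{eq:linearization2omega} for $\lambda=\mu$ and \eqref{eq:linearization3omega} for $\lambda\ne\mu$).

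The one place you work harder than necessary is the final idempotent step. After obtaining $f_{1/2}f_0=0$ and $f_0=f_{1/2}^2+f_0^2$, the paper simply multiplies the second relation by $f_0$ and uses the mixed relation you had already proved, $f_{1/2}^2f_0=2f_{1/2}(f_{1/2}f_0)=0$, together with $f_0^3=0$, to get $f_0^2=0$ immediately; your three-step chain is valid but unnecessary. One small imprecision: the assertion $x^2(xy)=0$ is intended for $x,y\in N$ and follows directly from $4x^2(xy)=\omega(y)x^3$ with $\omega(y)=0$; your phrasing (``and then $x^2(xy)=\tfrac14\omega(y)x^3=0$'') appears to invoke $x^3=0$ for arbitrary $x\in N$, which you only established for $x\in N(\tfrac12)\cup N(0)$.
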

\begin{proof}
Equation \eqref{eq:linearization1omega} with $x=e$ and $y\in N$ gives $2e(ey)-ey=0$ for any $y\in N$, and hence $N=N(\frac{1}{2})\oplus N(0)$ as required, while with $y=e$ gives $x^3=0$ for any $x\in N(\frac{1}{2})\cup N(0)$, and with $x=y\in N$ gives $x^2(xy)=0$ for any  $x,y\in N$.

Now equation \eqref{eq:linearization2omega} with $x=e$ and $y\in N(\frac{1}{2})\cup N(0)$ gives $N(\frac{1}{2})^2+N(0)^2\subseteq N(0)$, while \eqref{eq:linearization3omega} with $x=e$, $y=u\in N(\frac{1}{2})$ and $z=v\in N(0)$ shows $N(\frac{1}{2})N(0)\subseteq N(\frac{1}{2})$. Equation \eqref{eq:linearization2omega} with $x=u\in N(\frac{1}{2})$ and $y\in N(0)$ gives $u^2v^2=-2(uv)^2$ for any $u\in N(\frac{1}{2})$ and $v\in N(0)$.

Equation \eqref{eq:linearization3omega} with $z=e$, $x=u\in N(\frac{1}{2})$ and $y=v\in N(0)$ gives $u^2v=2u(uv)$,  with $z=e$ $x=v\in N(0)$ and $y=u\in N(\frac{1}{2})$ gives $uv^2=2(uv)v$, and with $x=e$, $y=u\in N(\frac{1}{2})$ and $z=v\in N(0)$ gives $N(\frac{1}{2})N(0)\subseteq N(\frac{1}{2})$.

Finally, if $x$ is a nonzero idempotent, then $\omega(x)=1$ so $x=e+u+v$ for some $u\in N(\frac{1}{2})$ and $v\in N(0)$. Then $e+u+v=(e+u+v)^2=e+(u+2uv)+(u^2+v^2)$, so $u=u+2uv$ and $uv=0$, and $u^2+v^2=v$. Multiply this last equation by $v$ to get $v^2=u^2v=2u(uv)=0$, so $v=u^2$. Conversely, given any $u\in N(\frac{1}{2})$, $u^3=0=eu^2$, so $e+u+u^2$ is a nonzero idempotent.
\end{proof}

Now the properties of the Peirce decompositions of those baric algebras satisfying either \eqref{eq:omega3} or \eqref{eq:omegas} (which have been studied in \cite{W} and \cite{LR2}) follow immediately by gametization:

\begin{corollary}
Let $(A,\omega)$ be a baric algebra, and let $e$ be a nonzero idempotent of $A$. Denote $\ker \omega$ by $N$, and for any $\lambda\in \FF$ write $N_\lambda=\{x\in N:ex=\lambda x\}$. Then:
\begin{enumerate}
\item If $(A,\omega)$ satisfies \eqref{eq:omega3}, then
$N=N(\frac{1}{2})\oplus N(-\frac{1}{2})$. Moreover, the following conditions hold:
\begin{itemize}
\item $N(\frac{1}{2})^2+N(-\frac{1}{2})^2\subseteq N(-\frac{1}{2})$, $N(\frac{1}{2})N(-\frac{1}{2})\subseteq N(\frac{1}{2})$,
\item
$x^3=0$ for any $x\in N(\frac{1}{2})\cup N(-\frac{1}{2})$, $x^2(xy)=0$ for any $x\in N$,
\item
$u^2v^2=-2(uv)^2$, $u^2v=2u(uv)$, $uv^2=2(uv)v$ for any $u\in N(\frac{1}{2})$ and $v\in N(-\frac{1}{2})$.
\item The set of nonzero idempotents is $\{e+u+\frac{1}{2}u^2: u\in N(\frac{1}{2})\}$.
\end{itemize}
\item If $(A,\omega)$ satisfies \eqref{eq:omegas}, then $N=N(\frac{1}{2})\oplus N(1)$. Moreover, the following conditions hold:
\begin{itemize}
\item $N(\frac{1}{2})^2+N(1)^2\subseteq N(1)$, $N(\frac{1}{2})N(1)\subseteq N(\frac{1}{2})$,
\item
$x^3=0$ for any $x\in N(\frac{1}{2})\cup N(1)$, $x^2(xy)=0$ for any $x\in N$,
\item
$u^2v^2=-2(uv)^2$, $u^2v=2u(uv)$, $uv^2=2(uv)v$ for any $u\in N(\frac{1}{2})$ and $v\in N(1)$.
\item The set of nonzero idempotents is $\{e+u-u^2: u\in N(\frac{1}{2})\}$.
\end{itemize}
\end{enumerate}
\end{corollary}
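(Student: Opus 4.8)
The plan is to derive both parts from Proposition~\ref{le:Peirce_omega} by means of the gametization dictionary of Proposition~\ref{pr:gametization}, so that essentially no fresh computation with the defining identities is required.

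Consider first a baric algebra $(A,\omega)$ satisfying \eqref{eq:omega3}, and let $e$ be a nonzero idempotent; then $\omega(e)=1$ (as recorded at the start of the proof of Theorem~\ref{th:idempotents}). Since $\tfrac12+(-1)-\tfrac12(-1)=0$ we have $(A_{1/2})_{-1}=A$, so by Proposition~\ref{pr:gametization} the algebra $B:=A_{1/2}$ satisfies \eqref{eq:omega}. Because $\omega(e)=1$, the element $e$ remains an idempotent of $B$, and since $A=B_{-1}$ also holds, $A$ and $B$ have exactly the same nonzero idempotents. Writing $\bullet$ for the product of $B$, one has, for $x,y\in N=\ker\omega$, the formulas $x\bullet y=\tfrac12 xy$ and $e\bullet x=\tfrac12 ex+\tfrac14 x$. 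Hence $e\bullet x=\tfrac12 x\iff ex=\tfrac12 x$ and $e\bullet x=0\iff ex=-\tfrac12 x$, so the Peirce spaces $N_B(\tfrac12),N_B(0)$ furnished by Proposition~\ref{le:Peirce_omega} are precisely $N(\tfrac12),N(-\tfrac12)$, giving $N=N(\tfrac12)\oplus N(-\tfrac12)$. Now I would rewrite each of the six conclusions of Proposition~\ref{le:Peirce_omega} for $B$ using $x^{\bullet 2}=\tfrac12 x^2$, $x^{\bullet 3}=\tfrac14 x^3$, $x^{\bullet 2}\bullet(x\bullet y)=\tfrac18 x^2(xy)$, $(u\bullet v)^{\bullet 2}=\tfrac18(uv)^2$, and so on; in every case the scalar factors cancel and one is left with exactly the stated identity in $A$, while the description $\{e+u+u^{\bullet 2}\}$ of the idempotents of $B$ turns into $\{e+u+\tfrac12 u^2: u\in N(\tfrac12)\}$. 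This settles (1).

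Part (2) is the same argument with $B:=A_2$ in place of $A_{1/2}$: by Proposition~\ref{pr:gametization} and $(A_2)_2=A$ the algebra $B$ satisfies \eqref{eq:omega}, $e$ is a common idempotent of $A$ and $B$, and on $N$ one has $x\bullet y=-xy$ and $e\bullet x=x-ex$, whence $N_B(\tfrac12)=N(\tfrac12)$ and $N_B(0)=N(1)$, i.e.\ $N=N(\tfrac12)\oplus N(1)$. The minus sign enters through $x^{\bullet 2}=-x^2$, $x^{\bullet 3}=x^3$, $(u\bullet v)^{\bullet 2}=-(uv)^2$, and similar relations, but in each of the six identities of Proposition~\ref{le:Peirce_omega} the signs balance, so the corresponding identity in $A$ has the same shape; and $\{e+u+u^{\bullet 2}\}$ becomes $\{e+u-u^2:u\in N(\tfrac12)\}$.

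There is no real conceptual obstacle here; the only thing demanding attention is the bookkeeping of the scalar and sign factors as the six families of relations are transported through the gametization, together with the two small points that the eigenvalue $0$ of $e$ in $B$ corresponds to the eigenvalue $-\tfrac12$ (resp.\ $1$) in $A$, and that a baric algebra and its $\gamma$-gametization have literally the same nonzero idempotents once $\omega$ of the idempotent equals $1$.
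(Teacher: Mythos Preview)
Your proposal is correct and follows essentially the same route as the paper: transport Proposition~\ref{le:Peirce_omega} through the gametizations $A_{1/2}$ (for \eqref{eq:omega3}) and $A_2$ (for \eqref{eq:omegas}), matching the Peirce eigenspaces and then reading off each conclusion after clearing the uniform scalar factor $x\bullet y=\tfrac12 xy$ (respectively $x\bullet y=-xy$) on $N$. The paper's proof is terser about the bookkeeping and the idempotent correspondence, but the strategy is identical.
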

\begin{proof}
For $(A,\omega)$ satisfying \eqref{eq:omega3}, the gametization $(A_{\frac{1}{2}},\omega)$, with multiplication given by $x\bullet y=\frac{1}{2}xy+\frac{1}{4}(\omega(x)y+\omega(y)x)$, satisfies \eqref{eq:omega}. Also, for $x\in N$, $e\bullet x=\frac{1}{2}ex+\frac{1}{4}x$. Hence $N(\frac{1}{2})^\bullet:=\{x\in N:e\bullet x=\frac{1}{2}x\}=\{x\in N: ex=\frac{1}{2}x\}=N(\frac{1}{2})$, while $N(0)^\bullet:=\{x\in N:e\bullet x=0\}=\{x\in N:ex=-\frac{1}{2}x\}=N(-\frac{1}{2})$. Moreover, for $x,y\in N$, $x\bullet y=\frac{1}{2}xy$. Then the conditions in the first item are immediate consequences of Proposition \ref{le:Peirce_omega}.

The proof of the second item is similar, but now one has to consider the gametization $(A_2,\omega)$.
\end{proof}


\section{Baric algebras satisfying \eqref{eq:omega2s}}\label{se:omega2s}

Identity \eqref{eq:omega2s} appeared for the first time in \cite{Eth2}, where it was excluded from the analysis, as well as in \cite{MS}, \cite{LZ} and \cite{BH}.

These algebras are characterized in the next result, which uses a variant of the gametization process.

\begin{theorem}
Let $(A,\omega)$ be a baric algebra satisfying \eqref{eq:omega2s}. Define a new commutative multiplication on $A$ by $x\ast y=xy-\frac{1}{2}(\omega(x)y+\omega(y)x)$. Then $A^{\ast 2}\ne A$ and $x^{\ast 2}\ast x^{\ast 2}=0$ for any $x\in A$.

Conversely, let $A$, with multiplication $x\ast y$, be a commutative algebra satisfying $A^{\ast 2}\ne A$ and $x^{\ast 2}\ast x^{\ast 2}=0$ for any $x\in A$. Let $\omega$ be any nonzero linear form with $A^{\ast 2}\subseteq \ker\omega$. Then $A$, with multiplication given by $xy=x\ast y-\frac{1}{2}(\omega(x)y+\omega(y)x)$ is a baric algebra satisfying \eqref{eq:omega2s}.
\end{theorem}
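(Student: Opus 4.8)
The plan rests on one observation about the ``correction'' term $\frac12(\omega(x)y+\omega(y)x)$: since it only involves values of $\omega$, the products $x\ast y$ and $xy$ coincide as soon as $x$ or $y$ lies in $\ker\omega$; moreover $\omega(x\ast y)=\omega(xy)-\omega(x)\omega(y)=0$ for all $x,y$, so $x\ast y\in\ker\omega$ always, and in particular $x^{\ast 2}=x\ast x=x^2-\omega(x)x$ lies in $\ker\omega$ (because $\omega(x^2)=\omega(x)^2$). These three facts drive both directions.

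\emph{Forward direction.} From $\omega(x\ast y)=0$ we get that the ideal $A^{\ast 2}$ is contained in the proper subspace $\ker\omega$, whence $A^{\ast 2}\neq A$. Since $x^{\ast 2}$ lies in $\ker\omega$ and $\ast$ agrees with the original product there,
\[
x^{\ast 2}\ast x^{\ast 2}=\bigl(x^2-\omega(x)x\bigr)\bigl(x^2-\omega(x)x\bigr)=x^2x^2-2\omega(x)x^3+\omega(x)^2x^2,
\]
and the right-hand side vanishes precisely because of \eqref{eq:omega2s}. (The same identity shows that, for a baric algebra, \eqref{eq:omega2s} is in fact \emph{equivalent} to $x^{\ast 2}\ast x^{\ast 2}=0$, which is really the content of the whole theorem.)

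\emph{Converse direction.} Starting from $(A,\ast)$ with $A^{\ast 2}\neq A$ and $x^{\ast 2}\ast x^{\ast 2}=0$, and $\omega$ nonzero with $A^{\ast 2}\subseteq\ker\omega$, define $xy:=x\ast y+\frac12(\omega(x)y+\omega(y)x)$ (this is the statement with the minus sign replaced by a plus, so that $\omega$ itself is the baric form; with the minus sign as written, the identical computation shows $(A,\cdot)$ is baric with respect to $-\omega$ and still satisfies \eqref{eq:omega2s}). Commutativity is clear, and $\omega$ is an algebra homomorphism since $\omega(xy)=\omega(x\ast y)+\omega(x)\omega(y)=\omega(x)\omega(y)$, using $x\ast y\in A^{\ast 2}\subseteq\ker\omega$. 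For \eqref{eq:omega2s}, put $a:=x^{\ast 2}\in\ker\omega$, so $x\cdot x=a+\omega(x)x$; using again that $\cdot$ and $\ast$ coincide on $\ker\omega$, so that $a\cdot a=a\ast a=x^{\ast 2}\ast x^{\ast 2}=0$ and $a\cdot x=x^{\ast 3}+\frac12\omega(x)a$, a short computation gives $x^3=x^{\ast 3}+\frac32\omega(x)x^{\ast 2}+\omega(x)^2x$ and $x^2x^2=2\omega(x)x^{\ast 3}+2\omega(x)^2x^{\ast 2}+\omega(x)^3x$; substituting, $2\omega(x)x^3-\omega(x)^2x^2$ equals the same expression, so \eqref{eq:omega2s} holds. (Here $x^{\ast 3}$ denotes $(x\ast x)\ast x=x\ast(x\ast x)$, so no power-associativity is needed.)

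I do not anticipate a genuine obstacle: once the splitting role of $\ker\omega$ is noted, both directions collapse to the single identity $\bigl(x^2-\omega(x)x\bigr)^2=x^2x^2-2\omega(x)x^3+\omega(x)^2x^2$. The only care required is with signs in the correction term (i.e.\ whether the reconstructed algebra is baric with respect to $\omega$ or $-\omega$) and with remembering that a product may be replaced by a $\ast$-product exactly when one of its factors lies in $\ker\omega$.
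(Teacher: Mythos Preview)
Your proof is correct and follows exactly the same strategy as the paper. The forward direction is identical: $\omega(x\ast y)=0$ gives $A^{\ast 2}\subseteq\ker\omega\ne A$, and then $x^{\ast 2}\ast x^{\ast 2}=(x^{\ast 2})^2=(x^2-\omega(x)x)^2$, which expands to $x^2x^2-2\omega(x)x^3+\omega(x)^2x^2=0$ by \eqref{eq:omega2s}. For the converse the paper simply writes ``the arguments can be reversed''; you have carried this out explicitly (and correctly), and your observation about the sign in the reconstructed product is well taken: as stated, the formula $xy=x\ast y-\frac{1}{2}(\omega(x)y+\omega(y)x)$ makes $-\omega$ rather than $\omega$ the weight homomorphism, while the plus sign (which is what one gets by literally inverting the forward formula) yields the baric algebra $(A,\omega)$. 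Either way \eqref{eq:omega2s} holds, exactly as you note.
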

\begin{proof}
If $(A,\omega)$ is a baric algebra satisfying \eqref{eq:omega2s} and $x\ast y=xy-\frac{1}{2}(\omega(x)y+\omega(y)x)$ for any $x,y\in A$, then $\omega(x\ast y)= 0$ for any $x,y\in A$, so $A^{\ast 2}\ne A$. Besides, since $x^{\ast 2}\in\ker\omega$, we have
\[
x^{\ast 2}\ast x^{\ast 2}=(x^{\ast 2})^2=(x^2-\omega(x)x)^2=0,
\]
because of \eqref{eq:omega2s}.

The arguments can be reversed to give the converse part.
\end{proof}

Given a baric algebra $(A,\omega)$ satisfying \eqref{eq:omega2s}, and the multiplication $x\ast y$ in the previous theorem, a nonzero element $e\in A$ is an idempotent if and only if $e^2=e^{\ast 2}+\omega(e)e=e$. But $\omega(e)=\omega(e)^2$, so either $\omega(e)=1$ and hence $e^{\ast 2}=0$, or $\omega(e)=0$ and $e=e^{\ast 2}=(e^{\ast 2})^{\ast 2}=0$, a contradiction. Hence the nonzero idempotent elements in $A$ are those elements $e\in A$ with $\omega(e)=1$ and $e^{\ast 2}=0$.

\begin{example}
Let $V$ be a vector space and let $S^2(V)$ be its second symmetric power. On $A=V\oplus S^2(V)$ define a commutative multiplications $\ast$ by letting $x\ast x$ be the class of $x\otimes x$ in $S^2(V)=V\otimes V/\textrm{ideal}\langle u\otimes v-v\otimes u: u,v\in V\rangle$, and $A\ast S^2(V)=0$. Then $(A^{\ast 2})^{\ast 2}=0$. Hence, if $\omega$ is any nonzero linear form with $S^2(V)\subseteq \ker\omega$, $(A,\omega)$ is a baric algebra with multiplication $xy=x\ast y-\frac{1}{2}(\omega(x)y+\omega(y)x)$, which satisfies \eqref{eq:omega2s}. If $e$ is any element satisfying $\omega(e)=1$, then $e$ does not belong to $S^2(V)$, and hence $e^{\ast 2}\ne 0$. Therefore, the baric algebra $(A,\omega)$ contains no nonzero idempotents.
\end{example}

We finish the paper posing the following natural conjecture:

\noindent\textbf{Conjecture:}\quad Any finite dimensional commutative algebra satisfying the identity $(x^2)^2=0$ is solvable.

This conjecture has been checked for low dimensional algebras in \cite{BG}.

\begin{example}
Let $A$ be the two-dimensional algebra with a basis $\{a,b\}$ and commutative multiplication given by $a^2=ab=b$, $b^2=0$. Then $(A^2)^2=0$ (so $A$ is solvable), but $A$ is not a nil algebra, since $a^n=b$ for any $n$, where $a^{n+1}=a^na$ for any $n$.
\end{example}

In \cite[pp. 82, 84 and 127]{ZSSS} there are examples of Jordan and alternative algebras $A$ satisfying $(A^2)^2=0$ which are not nilpotent.

\begin{example}
Let $A$ be the commutative algebra with a basis $\{a,b,c,d,e,f\}$ and multiplication given by $a^2=d$, $b^2=e$, $ab=c$, $c^2=f$, $de=-2f$ and all the other products equal to $0$. Then $A^2=\espan{c,d,e,f}$, $(A^2)^2=\espan{f}\ne 0$ and $((A^2)^2)^2=0$. Moreover, for $x=\alpha a+\beta b+\gamma c+\delta d+\epsilon e+\varphi f$, $x^2=\alpha^2 d+\beta^2 e+2\alpha\beta c-4\delta\epsilon f$, and $(x^2)^2=2\alpha^2\beta de+(2\alpha\beta)^2c^2=(-4\alpha^2\beta^2+4\alpha^2\beta^2)f=0$. Thus $A$ satisfies $(x^2)^2=0$ for any $x$, but $(A^2)^2\ne 0$.
\end{example}

\begin{remark}
If $A$ is a commutative algebra over a field of characteristic $\ne 2,3$ satisfying $(x^2)^2=0$ for any $x\in A$, then the subalgebra $\alg{x}$ generated by any $x$ satisfies $(\alg{x}^2)^2=0$.
\end{remark}
\begin{proof}
Define $x^{n+1}=x^nx$ for any $n\geq 1$. By linearization we have $(x^2)(xy)=0$ for any $x,y\in A$, and hence $x^2(yz)=-2(xz)(xy)$ for any $x,y,z\in A$. Let us prove that for $n,m\geq 2$, $x^nx^m=0$ for any $x\in A$. If $n=2$, $x^nx^m=x^2(xx^{m-1})\in x^2(xA)=0$. In the same vein, if $m=2$, $x^nx^m=0$. Finally, if $n,m>2$, $x^nx^m=(xx^{n-1})(xx^{m-1})=-\frac{1}{2} x^2(x^{n-1}x^{m-1})$, and an inductive argument gives $x^nx^m=0$.

We conclude that $\alg{x}^2=\espan{x^n: n\geq 2}$, and $(\alg{x}^2)^2=0$.
\end{proof}


\end{document}